\title[On a Loomis-Whitney Type Inequality]{On a Loomis-Whitney Type Inequality for Permutationally Invariant Unconditional Convex Bodies}
\author{Piotr Nayar}
\address{Piotr Nayar \\ Institute of Mathematics, University of Warsaw, Banacha 2, 02-097 Warszawa, Poland.}
\email{nayar@mimuw.edu.pl}
\author{Tomasz Tkocz}
\address{Tomasz Tkocz \\ College of Inter-Faculty Individual Studies in Mathematics and Natural Sciences, and Institute of Mathematics, University of Warsaw, Banacha 2, 02-097 Warszawa, Poland.}
\email{t.tkocz@students.mimuw.edu.pl}
\keywords{Loomis-Whitney inequality, unconditional convex bodies, permutational invariance, log-concavity, volumes of projections}
\subjclass[2000]{Primary 52A20; Secondary 52A40}
\newtheorem{thm}{Theorem}
\newtheorem{lm}{Lemma}
\theoremstyle{definition}
\newtheorem{remark}{Remark}
\newcommand{\R}{\mathbb{R}}
\newcommand{\E}{\mathbb{E}}
\newcommand{\dd}{\mathrm{d}}
\newcommand{\1}{\textbf{1}}
\newcommand{\fun}[3]{#1\colon #2 \longrightarrow #3}
\begin{document}

\begin{abstract}
For a permutationally invariant unconditional convex body $K$ in $\R^n$ we define a finite sequence $(K_j)_{j=1}^n$ of projections of the body $K$ to the space spanned by first $j$ vectors of the standard basis of $\R^n$. We prove that the sequence of volumes $(|K_j|)_{j=1}^n$ is log-concave.
\end{abstract}

\maketitle

\section{Introduction}\label{sec:intro}

The main interest in convex geometry is the examination of sections and projections of sets. Some introduction can be found in a monograph by Gardner, \cite{Gar-tomography}. We are interested in a class $\mathcal{PU}_n$ of convex bodies in $\R^n$ which are unconditional and permutationally invariant. 

Let us briefly recall some definitions. A convex body $K$ in $\R^n$ is called \emph{unconditional} if for every point $(x_1, \ldots, x_n) \in K$ and every choice of signs $\epsilon_1, \ldots, \epsilon_n \in \{-1, 1\}$ the point $(\epsilon_1 x_1, \ldots, \epsilon_n x_n)$ also belongs to $K$. A convex body $K$ in $\R^n$ is called \emph{permutationally invariant} if for every point $(x_1, \ldots, x_n) \in K$ and every permutation $\pi: \{1,  \ldots, n\} \longrightarrow \{1,  \ldots, n\}$ the point $(x_{\pi(1)}, \ldots, x_{\pi(n)})$ is also in $K$. A sequence $(a_i)_{i=1}^n$ of positive real numbers is called \emph{log-concave} if $a_i^2 \geq a_{i-1}a_{i+1}$, for $i = 2, \ldots, n-1$.

The main result of this paper reads as follows. 
\begin{thm}\label{thm1}
   Let $n \geq 3$ and let $K \in \mathcal{PU}_n$. For each $i = 1, \ldots, n$ we define a convex body $K_i \in \mathcal{PU}_i$ as an orthogonal projection of $K$ to the subspace $\{(x_1, \ldots, x_n) \in \R^n \ | \ x_{i+1} = \ldots = x_n = 0\}$. Then the sequence of volumes $(|K_i|)_{i=1}^n$ is log-concave. In particular
   \begin{equation}\label{eq0}
      |K_{n-1}|^2 \geq |K_n|\cdot |K_{n-2}|.
   \end{equation}
\end{thm}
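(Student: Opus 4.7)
The plan is to recast $|K_i|$ as a mixed volume and then invoke the Alexandrov--Fenchel inequality. Let $L_j = [0,1]\,e_j$ denote the unit segment along the $j$-th coordinate axis, and set $C := L_1 + \cdots + L_n = [0,1]^n$, the unit cube in $\mathbb{R}^n$. Writing $X[k]$ for $k$ copies of a body $X$, the classical projection formula---obtained by expanding $|K + \sum_j t_j L_{i+j}|$ as a multilinear polynomial in $(t_1, \ldots, t_{n-i})$ and comparing the coefficient of $t_1 \cdots t_{n-i}$ with the mixed-volume expansion---gives
\[
|K_i| \;=\; \frac{n!}{i!}\, V\bigl(K[i], L_{i+1}, \ldots, L_n\bigr).
\]
The key point is that, by the permutational invariance of $K$, the quantity $V(K[i], L_{j_1}, \ldots, L_{j_{n-i}})$ depends only on the cardinality of the set $\{j_1, \ldots, j_{n-i}\}$. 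Expanding $V(K[i], C[n-i])$ multilinearly in each of the $n-i$ copies of $C = \sum_j L_j$, terms containing two copies of the same segment $L_j$ vanish (two parallel segments force a subcollection of bodies to span fewer dimensions than its cardinality, so the mixed volume is zero), while the $n!/i!$ surviving terms, indexed by ordered $(n-i)$-tuples of distinct coordinates, are all equal. Hence
\[
|K_i| \;=\; V\bigl(K[i], C[n-i]\bigr).
\]

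Once this identity is in place, a single application of the Alexandrov--Fenchel inequality
\[
V(A, B, M_3, \ldots, M_n)^2 \;\geq\; V(A, A, M_3, \ldots, M_n)\, V(B, B, M_3, \ldots, M_n),
\]
with $A = K$, $B = C$ and the remaining $n-2$ slots filled by $K[i-1], C[n-i-1]$, yields
\[
|K_i|^2 \;=\; V(K[i], C[n-i])^2 \;\geq\; V(K[i+1], C[n-i-1])\, V(K[i-1], C[n-i+1]) \;=\; |K_{i+1}|\, |K_{i-1}|.
\]
This is the required log-concavity of $(|K_i|)_{i=1}^n$, and inequality \eqref{eq0} is the special case $i = n-1$.

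The main obstacle will be the mixed-volume identity $|K_i| = V(K[i], C[n-i])$: one has to carefully enumerate the many terms produced by the multilinear expansion, recognize which ones vanish (those repeating a segment direction), and verify that the count $n!/i!$ of surviving terms matches the combinatorial factor in the projection formula. Once this identity is secured, the Alexandrov--Fenchel step is automatic. Note that only permutational invariance of $K$ is used in this argument; unconditionality plays no role beyond the definition of $\mathcal{PU}_n$ and guaranteeing that each projection $K_i$ again lies in $\mathcal{PU}_i$.
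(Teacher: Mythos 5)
Your proposal is correct, and it takes a genuinely different route from the paper. The paper's proof is elementary: it uses unconditionality and permutational invariance to rewrite the volumes as integrals over the simplex $\{x_1 \geq \cdots \geq x_{n-1} \geq 0\}$, establishes the pointwise bound $\1_{K_n} \leq \prod_i g(x_1,\ldots,\hat{x_i},\ldots,x_n)$, and then reduces \eqref{eq0} to a one-dimensional inequality (Lemma \ref{lm1}) about a nonincreasing function whose $(n-2)$-th root is concave --- that concavity itself coming from the Brunn--Minkowski inequality applied to the sections of $K_{n-1}$. Induction then yields the full log-concavity. Your argument instead packages each $|K_i|$ as the mixed volume $V(K[i],C[n-i])$ with the cube $C=[0,1]^n$, using the standard projection formula together with the multilinear expansion of $C=\sum_j L_j$ and the vanishing of mixed volumes with a repeated segment; permutational invariance enters exactly once, to make all the $\binom{n}{i}$ surviving sets of segment directions give equal contributions. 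A single application of Alexandrov--Fenchel then gives the three-term log-concavity for every index $i$ at once, with no induction. This is shorter and more conceptual (once one is willing to invoke Alexandrov--Fenchel, a stronger tool than Brunn--Minkowski), and it has a real bonus: unconditionality is never used, so your argument actually proves the theorem under the weaker hypothesis that $K$ is merely permutationally invariant. This also situates the result cleanly next to the Giannopoulos--Hartzoulaki--Paouris inequality cited in Section~\ref{sec:intro}, which is the analogous Alexandrov--Fenchel statement without symmetry and therefore picks up the extra constant $\frac{n}{2(n-1)}$; your observation is that permutational invariance alone upgrades the averaged projection quantity $\binom{n}{i}^{-1}\sum_{|T|=i}|P_T K|$ to the single projection $|K_i|$ and hence removes the constant.

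One small point to tighten: when you say that $V(K[i],L_{j_1},\ldots,L_{j_{n-i}})$ ``depends only on the cardinality of $\{j_1,\ldots,j_{n-i}\}$,'' you should make explicit that the equality of all terms with distinct indices comes from applying the permutation matrix $P_\sigma$ (which fixes $K$ and sends $L_j$ to $L_{\sigma(j)}$, with $|\det P_\sigma|=1$) inside the mixed volume; this is the lone place the hypothesis on $K$ is used. Also verify the index range: the Alexandrov--Fenchel step needs $1\leq i\leq n-1$ so that both $K[i-1]$ and $C[n-i-1]$ make sense, which covers all required instances $i=2,\ldots,n-1$ in the definition of log-concavity.
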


Inequality \eqref{eq0} is related to the problem of negative correlation of coordinate functions on $K \in \mathcal{PU}_n$, i.e. the question whether for every $t_1, \ldots, t_n \geq 0$
\begin{equation}\label{eq1}
   \mu_K \left( \bigcap_{i=1}^n \{|x_i| \geq t_i\}\right) \leq \prod_{i=1}^n \mu_K \left(|x_i| \geq t_i \right),
\end{equation}
where $\mu_K$ is normalized Lebesgue measure on $K$. Indeed, the Taylor expansion of the function $h(t) = \mu_K(|x_1| \geq t)\mu_K(|x_2| \geq t) - \mu_K(|x_1| \geq t, |x_2| \geq t)$ at $t=0$ contains 
\[
  \frac{1}{|K_n|^2}\left( |K_{n-1}|^2 - |K_{n-2}|\cdot |K_n| \right)t^2,
\]
cf. \eqref{eq0}, as a leading term. The property \eqref{eq1}, the so-called concentration hypothesis and the central limit theorem for convex bodies are closely related, see \cite{An}. The last theorem has been recently proved by Klartag, \cite{Kla}. 

The negative correlation property in the case of generalized Orlicz balls was originally investigated by Wojtaszczyk in \cite{Onu}. A generalized Orlicz ball is a set
\[
B = \left\{ (x_1,\ldots,x_n) \in \R^n \; \big|  \; \sum_{i=1}^n f_i(|x_i|) \leq n   \right\},
\]
where $f_1,\ldots,f_n$ are some Young functions (see \cite{Onu} for the definition). In probabilistic terms Pilipczuk and Wojtaszczyk (see \cite{PW}) have shown that the random variable $X=(X_1,\ldots,X_n)$ uniformly distributed on $B$ satisfies the inequality
\[
\textrm{Cov}(f(|X_{i_1}|,\ldots,|X_{i_k}|), g(|X_{j_1}|,\ldots,|X_{j_l}|)) \leq 0
\]
for any bounded coordinate-wise increasing functions $\fun{f}{\R^k}{\R}$, $\fun{g}{\R^l}{\R}$ and any disjoint subsets $\{ i_1,\ldots,i_k \}$ and $\{ j_1,\ldots,j_l \}$ of $\{1,\ldots,n \}$. In the case of generalized isotropic Orlicz balls this result implies the inequality
\[
\textrm{Var}|X|^p \leq \frac{Cp^2}{n} \E |X|^{2p}, \qquad p \geq 2,
\]
from which some reverse H\"older inequalities can be deduced (see \cite{Fle}).

One may ask about an example of a nice class of Borel probability measures on $\R^n$ for which the negative correlation inequality hold. Considering the example of the measure with the density
\[
	p(x_1, \ldots, x_n) = \exp\left( -2(n!)^{1/n}\max\{|x_1|, \ldots, |x_n|\} \right),
\]
which was mentioned by Bobkov and Nazarov in a different context (see \cite[Lemma 3.1]{Bob}), we certainly see that the class of unconditional and permutationally invariant log-concave measures would not be the answer. Nevertheless, it remains still open whether the negative correlation of coordinate functions holds for measures uniformly distributed on the bodies from the class $\mathcal{PU}_n$.

We should remark that our inequality \eqref{eq0} is similar to some auxiliary result by Giannopoulos, Hartzoulaki and Paouris, see \cite[Lemma 4.1]{Pao}. They proved that a version of inequality \eqref{eq0} holds, up to the multiplicative constant $\frac{n}{2(n-1)}$, for an arbitrary convex body.

The paper is organised as follows. In Section \ref{sec:proof} we give the proof of Theorem \ref{thm1}. Section \ref{sec:remarks} is devoted to some remarks. Several examples are there provided as well.

\section{Proof of the main result}\label{sec:proof}

Here we deal with the proof of Theorem \ref{thm1}. We start with an elementary lemma.

\begin{lm}\label{lm1}
	Let $\fun{f}{[0, L]}{[0, \infty)}$ be a nonincreasing concave function such that $f(0) = 1$. Then
  \begin{equation} \label{eqlem}
   	\frac{n-1}{n}\left( \int_0^L f(x)^{n-2} \dd x \right)^2 \geq \int_0^L xf(x)^{n-2} \dd x, \qquad n \geq 3.
   \end{equation}
\end{lm}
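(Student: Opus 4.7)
The plan is to change variables $y = f(x)$ and reduce \eqref{eqlem} to a one-variable estimate about the function $V(s) := \int_{f(L)}^s y^{n-2}\, v(y)\,\dd y$, where $v(y) := -(f^{-1})'(y) \geq 0$ on $[f(L),1]$. The hypotheses that $f$ is concave and nonincreasing imply that $f^{-1}$ is concave and nonincreasing, whence $v$ is \emph{nondecreasing}; this is the only place where concavity of $f$ enters.

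After the substitution, using $x = f^{-1}(y) = \int_y^1 v(s)\,\dd s$ and Fubini on the resulting double integral, the two integrals of interest take the form
\[
I := \int_0^L f(x)^{n-2}\,\dd x = V(1), \qquad \int_0^L x f(x)^{n-2}\,\dd x = \int_{f(L)}^1 v(s)\, V(s)\,\dd s.
\]
Since $V'(s) = s^{n-2} v(s)$, the latter integral equals $\int_{f(L)}^1 V V'\, s^{-(n-2)}\,\dd s$, and a single integration by parts (using $V V' = (V^2)'/2$ and $V(f(L)) = 0$) rewrites it as
\[
\int_0^L x f(x)^{n-2}\,\dd x = \frac{I^2}{2} + \frac{n-2}{2}\int_{f(L)}^1 \frac{V(s)^2}{s^{n-1}}\,\dd s,
\]
so \eqref{eqlem} is equivalent to the one-variable estimate $\int_{f(L)}^1 V(s)^2/s^{n-1}\,\dd s \leq I^2/n$.

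The key observation for this estimate is that $s \mapsto V(s)/s^{n-1}$ is nondecreasing on $[f(L),1]$. Indeed, since $v$ is nondecreasing,
\[
V(s) = \int_{f(L)}^s y^{n-2} v(y)\,\dd y \;\leq\; v(s)\int_0^s y^{n-2}\,\dd y \;=\; \frac{s\, V'(s)}{n-1},
\]
which is exactly the statement $(V/s^{n-1})' \geq 0$. Evaluating at $s=1$ therefore gives $V(s) \leq I s^{n-1}$, hence $V(s)^2/s^{n-1} \leq I^2 s^{n-1}$, and integrating over $[f(L),1] \subseteq [0,1]$ yields $\int V(s)^2/s^{n-1}\,\dd s \leq I^2/n$, as required.

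The main conceptual obstacle is spotting the right change of variable $y = f(x)$; once one works on the $y$-side the monotonicity of $V(s)/s^{n-1}$ makes the estimate a one-line computation. Minor regularity concerns---when $f$ is not strictly decreasing or not $C^1$---can be dispensed with by a standard approximation argument, for instance by replacing $f$ with $(1-\varepsilon) f(x) + \varepsilon(1 - x/L)$ and letting $\varepsilon \to 0^+$.
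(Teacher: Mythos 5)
Your proof is correct, and it takes a genuinely different route from the paper's. The paper proves Lemma~\ref{lm1} by a comparison-with-extremizers argument: after normalizing $L=1$, it picks $\alpha\in[0,1]$ so that the affine function $g(x)=1-\alpha x$ has the same integral $\int_0^1 g^{n-2}$ as $\int_0^1 f^{n-2}$, uses the single-crossing of $f$ and $g$ (coming from concavity) to show $\int_0^1 x f^{n-2}\,\dd x\le\int_0^1 x g^{n-2}\,\dd x$, and then verifies the inequality for $g$ by direct computation and Bernoulli's inequality. Your argument instead changes variables to $y=f(x)$, observes that $v=-(f^{-1})'$ is nondecreasing (which, for a decreasing $f$, is exactly equivalent to concavity of $f$), and reduces everything via Fubini and one integration by parts to the monotonicity of $s\mapsto V(s)/s^{n-1}$. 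The steps check out: the boundary term at $s=f(L)$ vanishes even when $f(L)=0$, since concavity with $f(0)=1>0=f(L)$ forces $f'_-(L)<0$, so $v$ stays bounded near $0$; and $\int_{f(L)}^1 s^{n-1}\,\dd s\le 1/n$ uses only $f\ge 0$. Your approximation remark handles the non-strictly-decreasing and non-$C^1$ cases cleanly.

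As for what each approach buys: the paper's reduction to the affine extremizer makes the equality case ($f(x)=1-x/L$) immediately visible, which the authors exploit in Remark~\ref{rem1} to characterize equality in Theorem~\ref{thm1}. Your proof yields the same equality characterization, but only after noticing that equality forces $V(s)=Is^{n-1}$ (hence $v$ constant, hence $f$ affine) and $f(L)=0$, so a bit more unpacking is needed. On the other hand, your route isolates exactly where concavity is used (only through $v$ nondecreasing) and avoids the ``guess the extremizer and verify'' pattern, which some may find more transparent. Both proofs are clean; yours is a worthy alternative.
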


\begin{proof}
By a linear change of a variable one can assume that $L=1$. Since $f$ is concave and nonincreasing, we have $1-x \leq f(x) \leq x$ for $x \in [0,1]$. Therefore, there exists a real number $\alpha \in [0,1]$ such that for $g(x) = 1 - \alpha x$ we have
\[
	\int_0^1 f(x)^{n-2} \dd x = \int_0^1 g(x)^{n-2} \dd x.
\] 
Clearly, we can find a number $c \in [0,1]$ such that $f(c) = g(c)$. Since $f$ is concave and $g$ is affine, we have $f(x) \geq g(x)$ for $x \in [0, c]$ and $f(x) \leq g(x)$ for $x \in [c, 1]$. Hence,
\begin{align*}
   \int_0^1 x(f(x)^{n-2} - g(x)^{n-2}) \dd x \leq &\int_0^c c(f(x)^{n-2} - g(x)^{n-2}) \dd x \\
   &\ + \int_c^1 c(f(x)^{n-2} - g(x)^{n-2}) \dd x \ = \ 0.
\end{align*}
We conclude that it suffices to prove (\ref{eqlem}) for the function $g$, which is by simple computation equivalent to
\begin{align*}
	\frac{1}{\alpha^2 n(n-1)}\left( 1 - (1 - \alpha)^{n-1} \right)^2 \geq \frac{1}{\alpha^2}\Bigg( &\frac{1}{n-1}\left( 1 - (1 - \alpha)^{n-1} \right) \\
&- \frac{1}{n}\left( 1 - (1 - \alpha)^n \right)\Bigg).
\end{align*}
To finish the proof one has to perform a short calculation and use Bernoulli's inequality.

\end{proof}

\begin{remark}
	A slightly more general form of this lemma appeared in \cite{GMR} and, as it is pointed out in that paper, the lemma is a particular case of a result of \cite[p. 182]{MOP}. Only after the paper was written we heard about these references from Prof. A. Zvavitch, for whom we are thankful. Our proof differs only in a few details, yet it is provided for the convenience of the reader.
\end{remark}

\begin{proof}[Proof of Theorem \ref{thm1}]
Due to an inductive argument it is enough to prove inequality \eqref{eq0}.
   
Let $\fun{g}{\R^{n-1}}{\{0, 1\}}$ be a characteristic function of the set $K_{n-1}$. Then, by permutational invariance and unconditionality, we have
\begin{equation}\label{eq2}
   |K_{n-1}| = 2^{n-1}(n-1)!\int_{x_1 \geq \ldots \geq x_{n-1} \geq 0} g(x_1, \ldots, x_{n-1}) \dd x_1 \ldots \dd x_{n-1},
\end{equation}
and similarly
\begin{equation}\label{eq3}
	|K_{n-2}| = 2^{n-2}(n-2)!\int_{x_1 \geq \ldots \geq x_{n-2} \geq 0} g(x_1, \ldots, x_{n-2}, 0) \dd x_1 \ldots \dd x_{n-2}.
\end{equation}
Moreover, permutational invariance and the definition of a projection imply
\begin{equation}\label{eq5}
	\1_{K_n}(x_1, \ldots, x_n) \leq \prod_{i=1}^n g(x_1, \ldots, \hat{x_i}, \ldots, x_n).
\end{equation}
Thus
\begin{equation}\label{eq7}
\begin{split}
	|K_n| &\leq 2^nn!\int_{x_1 \geq \ldots \geq x_n \geq 0} \prod_{i=1}^n g(x_1, \ldots, \hat{x_i}, \ldots, x_n)\dd x_1 \ldots \dd x_n  \\
	&= 2^nn!\int_{x_1 \geq \ldots \geq x_n \geq 0} g(x_1, \ldots, x_{n-1}) \dd x_1 \ldots \dd x_n \\
	&= 2^nn!\int_{x_1 \geq \ldots \geq x_{n-1} \geq 0} x_{n-1}g(x_1, \ldots, x_{n-1}) \dd x_1 \ldots \dd x_{n-1},
\end{split}
\end{equation}
where the first equality follows from the monotonicity of the function $g$ for nonnegative arguments with respect to each coordinate. We define a function $\fun{F}{[0, \infty)}{[0, \infty)}$ by the equation
\[
	F(x) = \frac{\int_{x_1 \geq \ldots \geq x_{n-2} \geq x} g(x_1, \ldots, x_{n-2}, x) \dd x_1 \ldots \dd x_{n-2}}{\int_{x_1 \geq \ldots \geq x_{n-2} \geq 0} g(x_1, \ldots, x_{n-2}, 0) \dd x_1 \ldots \dd x_{n-2}}.
\]
One can notice that
\begin{enumerate}[1.]
   \item $F(0) = 1$.
   \item The function $F$ is nonincreasing as so is the function
   \[
   	x \mapsto g(x_1, \ldots, x_{n-2}, x)\1_{\{x_1 \geq \ldots \geq x_{n-2} \geq x\}}.
   \]
   \item The function $F^{1/(n-2)}$ is concave on its support $[0, L]$ since $F(x)$ multiplied by some constant equals the volume of the intersection of the convex set $K_{n-1} \cap \{x_1 \geq \ldots \geq x_{n-1} \geq 0\}$ with the hyperplane $\{x_{n-1} = x\}$. This is a simple consequence of the Brunn-Minkowski inequality, see for instance \cite[page 361]{Gar}.
\end{enumerate}
By the definition of the function $F$ and equations \eqref{eq2}, \eqref{eq3} we obtain
\[
	\int_0^L F(x) \dd x = \frac{\frac{1}{2^{n-1}(n-1)!}|K_{n-1}|}{\frac{1}{2^{n-2}(n-2)!}|K_{n-2}|} = \frac{1}{2(n-1)}\cdot\frac{|K_{n-1}|}{|K_{n-2}|},
\]
and using inequality \eqref{eq7}
\[
	\int_0^L xF(x) \dd x \geq  \frac{\frac{1}{2^nn!}|K_n|}{\frac{1}{2^{n-2}(n-2)!}|K_{n-2}|} = \frac{1}{2^2n(n-1)}\cdot\frac{|K_n|}{|K_{n-2}|}.
\]

Therefore it is enough to show that
\[
	\frac{n-1}{n}\left( \int_0^L F(x) \dd x \right)^2 \geq \int_0^L xF(x) \dd x.
\]
This inequality follows from Lemma \ref{lm1}.
\end{proof}

\section{Some remarks}\label{sec:remarks}

In this section we give some remarks concerning Theorem \ref{thm1}.

\begin{remark}\label{rem1}

Apart from the trivial example of the $B_\infty^n$ ball, there are many other examples of bodies for which equality in \eqref{eq0} is attained. Indeed, analysing the proof, we observe that for the equality in \eqref{eq0} the equality in Lemma \ref{lm1} is needed. Therefore, the function $F^{1/(n-2)}$ has to be linear and equal to $1 - x$. Taking into account the equality conditions in the Brunn-Minkowski inequality (consult \cite[page 363]{Gar}), this is the case if and only if the set $K_{n-1} \cap \{x_1 \geq \ldots \geq x_{n-1} \geq 0\}$ is a cone $C$ with the base $(K_{n-2} \cap \{x_1 \geq \ldots \geq x_{n-2} \geq 0\})\times \{0\} \subset \R^{n-1}$ and the vertex $(z_0, \ldots, z_0) \in \R^{n-1}$. Thus if for a convex body $K \in \mathcal{PU}_n$ we have the equality in \eqref{eq0}, then this body $K$ is constructed in the following manner. Take an arbitrary $K_{n-2} \in \mathcal{PU}_{n-2}$. Define the set $K_{n-1}$ as the smallest permutationally invariant unconditional body containing $C$. For $z_0$ from some interval the set $K_{n-1}$ is convex. For the characteristic function of the body $K$ we then set $\prod_{i=1}^n \1_{K_{n-1}}(x_1, \ldots, \hat{x_i}, \ldots, x_n)$.
   
A one more natural question to ask is when a sequence $(|K_i|)_{i=1}^n$ is geometric
Bearing in mind what has been said above for $i=2, 3, \ldots, n-1$ we find that a sequence $(|K_i|)_{i=1}^n$ is geometric if and only if
\[
   	K = [-L, L]^n \ \cup \bigcup_{i \in \{1, \ldots, n\}, \epsilon \in \{-1, 1\}} \textrm{conv} \left\{ \epsilon a e_i, \{x_i = \epsilon L, |x_k| \leq L, k \neq i\} \right\},
   \]
for some positive parameters $a$ and $L$ satisfying $L < a < 2L$, where $e_1, \ldots, e_n$ stand for the standard orthonormal basis in $\R^n$. One can easily check that $|K_i| = 2^iL^{i-1}a$.
\end{remark}

\begin{remark}\label{rem2}
Suppose we have a sequence of convex bodies $K_n \in \mathcal{PU}_n$, for $n \geq 1$, such that $K_n = \pi_{n}(K_{n+1})$, where by $\fun{\pi_n}{\R^{n+1}}{\R^n}$ we denote the projection $\pi_n(x_1, \ldots, x_n, x_{n+1}) = (x_1, \ldots, x_n)$. Since Theorem \ref{thm1} implies that the sequence $(|K_n|)_{n=1}^\infty$ is log-concave we deduce the existence of the limits
\[
  \lim_{n\to\infty} \frac{|K_{n+1}|}{|K_n|}, \qquad \lim_{n\to\infty} \sqrt[n]{|K_n|}.
\]
We can obtain this kind of sequences as finite dimensional projections of an Orlicz ball in $\ell_\infty$.
\end{remark}

\section*{Acknowledgements}

The authors would like to thank Prof. K.~Oleszkiewicz for a valuable comment regarding the equality conditions in Theorem \ref{thm1} as well as Prof. R.~Lata{\l}a for a stimulating discussion.

\end{document}